\documentclass[12pt]{amsart}
\usepackage{amssymb, amstext, amscd, amsmath}
%
\makeatletter
\def\@cite#1#2{{\m@th\upshape\bfseries%
[{#1\if@tempswa{\m@th\upshape\mdseries, #2}\fi}]}}
\makeatother
%
%

%
\theoremstyle{plain}
\newtheorem{thm}{Theorem}[section]

\newtheorem{prop}[thm]{Proposition}
\newtheorem{lem}[thm]{Lemma}
%
\theoremstyle{definition}
\newtheorem{rem}[thm]{Remark}
\newtheorem{defn}[thm]{Definition}

%

%

\newcommand{\eqbx}[1]{\medbreak\hfill\(\displaystyle #1\)\end{proof}}

\newcommand{\ca}{\mathrm{C}^*}

\newcommand{\rip}{\rangle}

\newcommand{\sot}{\textsc{sot}}


\newcommand{\bbC}{{\mathbb{C}}}

\newcommand{\bbJ}{{\mathbb{J}}}
\newcommand{\bbD}{{\mathbb{D}}}

\newcommand{\bbZ}{{\mathbb{Z}}}

  \newcommand{\A}{{\mathcal{A}}}
  \newcommand{\B}{{\mathcal{B}}}

\renewcommand{\H}{{\mathcal{H}}}

  \newcommand{\U}{{\mathcal{U}}}

  \newcommand{\X}{{\mathcal{X}}}



\newcommand{\rC}{{\mathrm{C}}}

\newcommand{\fA}{{\mathfrak{A}}}

\newcommand{\fB}{{\mathfrak{B}}}


\newcommand{\AND}{\text{ and }}

\newcommand{\qand}{\quad\text{and}\quad}

\newcommand{\qfor}{\quad\text{for}\quad}
\newcommand{\qforal}{\quad\text{for all}\quad}

\newcommand{\ad}{\operatorname{ad}}

\newcommand{\id}{{\operatorname{id}}}
\newcommand{\irred}{{\operatorname{irred}}}

\newcommand{\Prim}{\operatorname{Prim}}

\newcommand{\rep}{\operatorname{rep}}


\newcommand{\lip}{\langle}


\begin{document}

\title[Simple C*-algebras]{Semicrossed Products of Simple C*-algebras.}
\thanks{}

\author[K.R.Davidson]{Kenneth R. Davidson}
\address{Pure Math.\ Dept.\\U. Waterloo\\Waterloo, ON\;
N2L--3G1\\CANADA}
\email{krdavids@uwaterloo.ca}

\author[E.G. Katsoulis]{Elias~G.~Katsoulis}
\address{Dept.\ Mathematics\\East Carolina U.\\
Greenville, NC 27858\\USA}
\email{KatsoulisE@mail.ecu.edu}
\begin{abstract}
Let $(\A, \alpha)$ and $(\B, \beta)$ be C*-dynamical systems and
assume that $\A$ is a separable simple C*-algebra and that
$\alpha$ and $\beta$ are $*$-automorphisms.
Then the semicrossed products $\A \times_{\alpha} \bbZ^{+}$ and
$\B \times_{\beta} \bbZ^{+}$ are isometrically isomorphic
if and only if the dynamical systems $(\A, \alpha)$ and $(\B, \beta)$
are outer conjugate.
\end{abstract}

\subjclass[2000]{Primary  47L65, 46L40}
\keywords{conjugacy algebra, semicrossed product, dynamical system,
universally weakly inner automorphism}
\thanks{First author partially supported by an NSERC grant.}
\thanks{Second author partially supported by
a summer grant from ECU}
\date{}
\maketitle
\section{introduction}

The main objective of this paper is the classification of semicrossed products
of separable simple C*-algebras by an automorphism, up to
isometric isomorphism. It is easily seen (and well-known) that
outer conjugacy between automorphisms of arbitrary C* algebras
is a sufficient condition for the existence of an isometric isomorphism
between the associated semicrossed products.
In this paper we show that for separable simple
C*-algebras, this is also a necessary condition.
This follows from the following general result: if
 $\alpha$ and $\beta$ are automorphisms
of arbitrary C*-algebras $\A$ and  $\B$, then
the presence of an isometric isomorphism from $\A \times_{\alpha} \bbZ^{+}$ onto
$\B \times_{\beta} \bbZ^{+}$ implies the existence of a C*-isomorphism
$\gamma: \A \rightarrow \B$ so that
$\alpha \circ \gamma^{-1}\circ \beta^{-1}\circ \gamma$ is
universally weakly inner with respect to irreducible representations.
The result for simple C*-algebras follows then from a remarkable
result of Kishimoto \cite{Kis} which shows that for
a separable simple C*-algebra, all universally weakly inner automorphisms
are actually inner.

Let $(\A, \alpha)$ be a (discrete) C*-dynamical system, i.e., a C*-algebra
$\A$ together with a $*$-endomorphism $\alpha$ of $\A$.
Motivated by a construction of Arveson \cite{Arv},
Peters \cite{Pet} introduced the concept of the semicrossed
product $\A \times_{\alpha} \bbZ^{+}$.
This is the universal  operator algebra for contractive
covariant representations of this system.

In the commutative case, the
semicrossed product is an algebra
$\rC_0(\X) \times_{\sigma}\bbZ^+$ determined by a dynamical system
$(\X, \sigma)$ given by a proper continuous map $\sigma$ acting on a
locally compact Hausdorff space $X$. Under the assumption that
the topological spaces are compact and the maps are aperiodic, Peters \cite{Pet}
showed that two such semicrossed products are isomorphic
as algebras if and only if the corresponding dynamical systems are conjugate,
thus extending an earlier classification scheme of Arveson \cite{Arv} and
Arveson and Josephson \cite{ArvJ}.
In spite of the subsequent interest in semicrossed products and their
variants \cite{AlaP, Bu, BP, HPW, Lam-1, Lam, MM, OS, Pow, Solel-1},
the problem of classifying semicrossed products of the form
$\rC_0(\X) \times_{\sigma}\bbZ^+$ remained open in the generality introduced by
Peters in \cite{Pet} until our recent paper \cite{DavK}, which established that the
Arveson-Josephson-Peters classification scheme holds with
no restrictions on either $\X$ or $\sigma$.
It was our desire to apply the techniques of \cite{DavK}
and \cite{DK2} to more general settings that
motivated the research of the present paper.

The present paper provides for the first time a classification scheme
for semicrossed products which is valid for a broad class of C*-algebras,
without posing any restrictions on the automorphisms involved.
Our result complements a similar result of Muhly and
Solel~\cite[Theorem~4.1]{MS} regarding semicrossed products with
automorphisms having full Connes spectrum.
In Theorem \ref{MS}, we give an alternative proof of their result
using representation theory.
Both results seem to indicate that outer conjugacy is a complete
invariant for isometric isomorphisms between arbitrary semicrossed products.
They also suggest the problem of establishing the
validity of the conclusion under the weaker requirement of an
\textit{algebraic} isomorphism instead of an isometric isomorphism.

\section{Preliminaries}

Let $\A$ be a C*-algebra and $\alpha$ an endomorphism of $\A$.
The \textit{skew polynomial algebra}
$P(\A, \alpha)$ consists of all polynomials of the form
$\sum_n U_{\alpha}^n A_n$, $A_n\in \A$, where the multiplication of the
``coefficients'' $A\in \A$ with the ``variable'' $U_{\alpha}$ obeys the rule
\[
AU_{\alpha} =   U_{\alpha}\alpha(A)
\]
Equip $P(\A, \alpha)$ with the $l^1$-norm
\[
 \big\| \sum_n U_{\alpha}^n A_n \big\|_1 \equiv \sum_n \| A_n\|
\]
and let $l^1(\A, \alpha)$ be the completion of $P(\A, \alpha)$ with respect to $\|\, .\,\|_1$.

An \textit{(isometric) covariant representation} $(\pi, V)$ of $(\A, \alpha)$ consists of
a C*-representation $\pi$  of $\A$ on a Hilbert space $\H$ and an isometry $V$ on $\H$ so that
$\pi(A)V= V\pi(\alpha(A))$, for all $A \in \A$. Each covariant representation
induces a representation $\pi\times V$ in an obvious way.

\begin{defn}
For $P \in l^1(\A, \alpha)$ let
\[
 \|P\| := \sup  \big\{ \|(\pi\times V) (P)\| :  \pi\times V\
\mbox{ is covariant} \big\}
\]
where $\pi\times V$ runs over all isometric covariant representations of $(\A, \alpha)$.
The \textit{semicrossed product} $\A \times_{\alpha} \bbZ^{+}$
of $\A$ by $\alpha$ is the completion of $l^1(\A, \alpha)$
with respect to this  norm.
\end{defn}

For an alternative description of $\A \times_{\alpha} \bbZ^{+}$, one
may start by obtaining a universal covariant representation
$(\pi\times V)$ of $(\A , \alpha) $ and then define $\A \times_{\alpha} \bbZ^{+}$
to be the non-sefadjoint operator algebra generated $\pi(\A)$ and $V$.
The two constructions produce isomorphic algebras.
For each covariant representation $(\pi, V)$ of $(\A, \alpha)$,
the representation $\pi\times V$ extends uniquely to a contractive
representation of $\A \times_{\alpha} \bbZ^{+}$,
which will also be denoted as $\pi\times V$.

In this paper, we will exclusively
work with \textit{invertible} C*-dynamical systems, i.e.,
the endomorphism will actually be an automorphism.
Therefore we now drop the adjective ``invertible'',
and by C*-dynamical system we will mean an invertible one.
Given an (invertible) dynamical system $(\A, \alpha)$, the
unitary covariant representations for $(\A, \alpha)$ suffice
to capture the norm for $\A \times_{\alpha} \bbZ^{+}$.
Therefore, $\A \times_{\alpha} \bbZ^{+}$ is a natural
nonselfadjoint subalgebra of the crossed product C*-algebra
$\A \times_{\alpha} \bbZ$.

\begin{defn}
Two C*-dynamical systems $(\A, \alpha)$ and $(\B, \beta)$ are said to
be \textit{outer conjugate} if there
exists a C*-isomorphism $\gamma : \A \rightarrow \B$
and a unitary $W \in M(\A)$, the multiplier algebra of $\A$, so
that
\[ \alpha = \ad_W \gamma^{-1} \circ \beta \circ \gamma .\]
\end{defn}

The main issue in this paper is the classification of semicrossed products up to
isometric isomorphism. The following elementary result shows that the outer
conjugacy of automorphisms provides a sufficient condition for the
existence of such an isomorphism.

\begin{prop}  \label{onedir}
If the dynamical systems
$(\A, \alpha)$ and $(\B, \beta)$ are outer conjugate,
then the semicrossed products $\A \times_{\alpha} \bbZ^{+}$
and $\B \times_{\beta} \bbZ^{+}$ are isometrically isomorphic.
\end{prop}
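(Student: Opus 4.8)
The plan is to transfer the classifying data directly at the level of covariant representations; the conceptual point is that outer conjugacy is exactly the freedom to replace $\alpha$ by a $\gamma$-conjugate of an inner perturbation $\ad_w\circ\beta$, and such an inner perturbation is absorbed by a cocycle correction on the ``variable'' $U_\alpha$.

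Fix a $*$-isomorphism $\gamma:\A\to\B$ and a unitary $W\in M(\A)$ with $\alpha=\ad_W\circ\gamma^{-1}\circ\beta\circ\gamma$, and set $w:=\gamma(W)\in M(\B)$, so that $\gamma(\alpha(A))\,w=w\,\beta(\gamma(A))$ for all $A\in\A$. Since $\beta$ is an automorphism, the essential subspace of the representation of $\B$ occurring in any covariant representation of $(\B,\beta)$ is invariant for the accompanying isometry (use $\rho(B')T=T\rho(\beta(B'))$ and surjectivity of $\beta$); compressing there shows that nondegenerate covariant representations already compute the semicrossed product norm. So I may assume all covariant representations below are nondegenerate, and then the representations of $\A$ and $\B$ extend to the multiplier algebras and $\rho(w)$ makes sense.

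First I would establish a bijection between the (nondegenerate) covariant representations of $(\B,\beta)$ and those of $(\A,\alpha)$. Given $(\rho,T)$ for $(\B,\beta)$ on $\H$, put $\pi:=\rho\circ\gamma$ and $V:=T\,\rho(w)^*$. A short computation with $\rho(C)T=T\rho(\beta(C))$ and the identity above shows $V^*V=I$ and $\pi(A)V=V\pi(\alpha(A))$, so $(\pi,V)$ is covariant for $(\A,\alpha)$; the assignment $(\rho,T)\mapsto(\pi,V)$ is a bijection, with inverse $(\pi,V)\mapsto(\pi\circ\gamma^{-1},\,V\,\pi(W))$, by the symmetric computation. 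Next, an induction on $n$ using the covariance relation gives $V^n=T^n\rho(c_n)^*$ with $c_n:=\prod_{j=0}^{n-1}\beta^j(w)\in M(\B)$ and $c_0=1$. Hence, for $P=\sum_n U_\alpha^n A_n\in P(\A,\alpha)$,
\[
(\pi\times V)(P)=\sum_n T^n\rho(c_n)^*\rho(\gamma(A_n))=\sum_n T^n\rho\bigl(c_n^*\gamma(A_n)\bigr)=(\rho\times T)(\Phi P),
\]
where $\Phi P:=\sum_n U_\beta^n\bigl(c_n^*\gamma(A_n)\bigr)\in P(\B,\beta)$; note $c_n^*\gamma(A_n)\in\B$ because $c_n$ is a multiplier.

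Finally I would read off the conclusion. Taking the supremum over covariant representations and using the bijection, $\|P\|_{\A\times_\alpha\bbZ^+}=\|\Phi P\|_{\B\times_\beta\bbZ^+}$, so $\Phi$ is an isometry of $P(\A,\alpha)$ into $P(\B,\beta)$; it is manifestly linear, and it is multiplicative because $(\rho\times T)\circ\Phi=\pi\times V$ is multiplicative for every covariant pair while the semicrossed norm is a genuine norm on $P(\B,\beta)$. Thus $\Phi$ extends to an isometric homomorphism $\A\times_\alpha\bbZ^+\to\B\times_\beta\bbZ^+$. Since each $c_n$ is a unitary multiplier, $c_n^*\gamma(\A)=\B$, so the range of $\Phi$ contains every $U_\beta^n B$ ($B\in\B$), hence the dense subalgebra $P(\B,\beta)$; being isometric, $\Phi$ has closed range, and so $\Phi$ is an isometric isomorphism. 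The hard part, such as it is, is purely bookkeeping: the reduction to nondegenerate representations needed in order to apply the multiplier $w$, and, when $\A$ and $\B$ are nonunital, checking that the elements produced ($U_\beta^n B$, $c_n^*\gamma(A_n)$, and the like) genuinely lie in the $\ell^1$-algebras rather than only in an ambient multiplier algebra. The algebraic core is routine.
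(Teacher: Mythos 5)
Your proof is correct, and it takes a genuinely different route from the paper. The paper reduces without loss of generality to $\A=\B$, $\gamma=\id$, passes to the C*-crossed product $M(\A)\times_{\bar\beta}\bbZ$, shows that $\ad_{WU_\beta}$ extends $\alpha$, invokes the gauge-invariant uniqueness theorem to identify $\ca(\A,WU_\beta)$ with $\A\times_\alpha\bbZ$, and then matches the nonselfadjoint subalgebras via the cocycle identity $A(U_\beta W)^n=A\bar\beta(W)\cdots\bar\beta^n(W)U_\beta^n$. You instead stay entirely inside the universal-norm definition of the semicrossed product: a cocycle-twisted bijection $(\rho,T)\mapsto(\rho\circ\gamma,\,T\rho(w)^*)$ between nondegenerate isometric covariant representations, together with the induced map $\Phi$ on polynomials built from the same cocycle $c_n=\prod_{j=0}^{n-1}\beta^j(w)$, yields an explicit isometric isomorphism with no appeal to crossed products or Katsura's theorem. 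What your approach buys is elementarity and self-containment (the only structural input is the reduction to nondegenerate representations, which, as you note, uses surjectivity of $\beta$); what the paper's approach buys is brevity and the concrete realization of both semicrossed products inside a single crossed product, a picture that is reused later (e.g., for the expectation $\hat\Phi_\beta$ in Section 4). Two small points you correctly file under bookkeeping but should make explicit if written up: the induction giving $V^n=T^n\rho(c_n)^*$ needs the covariance relation extended to multipliers, $\rho(M)T=T\rho(\bar\beta(M))$ for $M\in M(\B)$, which follows from nondegeneracy of $\rho$ and surjectivity of $\beta$; and the multiplicativity argument for $\Phi$ uses the standard fact that the covariant representations separate points of $P(\B,\beta)$ (equivalently, the semicrossed norm is faithful on polynomials, e.g.\ via Peters' orbit representation or the embedding into $\B\times_\beta\bbZ$).
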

 
\begin{proof}
Without loss of generality assume that $\A = \B$ and $\gamma= \id$.
Let $W \in M(\A)$ so that $\alpha(A) = W\beta(A)W^*$.
Observe that $\beta$ has a unique extension to an automorphism $\bar\beta$
of $M(\A)$ such that
\[ \bar\beta(M) \beta(A) = \beta(MA) \qforal A \in \A \AND M \in M(\A) ,\]
namely, $\bar\beta(M) A = \beta(M\beta^{-1}(A))$.
Therefore $\A \times_\beta \bbZ$ is naturally a subalgebra of
$M(\A) \times_{\bar\beta} \bbZ$ generated by $\A$ and the
universal unitary $U_\beta$ satisfying
$U_\beta MU_\beta^* = \beta(M)$ for $M \in M(\A)$.

Now notice that $\ad_{WU_\beta}$ implements $\bar\alpha$, the extension of
$\alpha$ to $M(\A)$ because it is an automorphism which acts as $\alpha$
on $\A$.  Whence it carries $M(\A)$ to itself, and is the unique extension
of $\alpha$ to $M(\A)$.
Therefore $\ca(\A, WU_\beta)$ determines a representation $\sigma$ of
the C*-algebra crossed product $\A \times_\alpha \bbZ$ given
by $\sigma|_\A = \id$ and $\sigma(U_\alpha) = U_\beta W$.
Since this representation is faithful on $\A$, it follows from the
gauge invariance uniqueness theorem \cite[Theorem 6.4]{Kat} that this is a faithful
representation of the crossed product.

Next observe that $\ca(\A, WU_\beta) = \A \times_\beta \bbZ$.
The point is that
\begin{align*}
 A (U_\beta W)^n &=
 A (U_\beta W U_\beta^*) (U_\beta^2 W U_\beta^{*2})\cdots
 (U_\beta^n W U_\beta^{*n}) U_\beta^n  \\ &=
 A \bar\beta(W) \bar\beta^2(W) \cdots \bar\beta^n(W) U_\beta^n \\ &=
 B U_\beta^n
\end{align*}
where $B = A \bar\beta(W) \bar\beta^2(W) \cdots \bar\beta^n(W)$ belongs to $\A$.
It now follows that $\A (U_\beta W)^n = \A U_\beta^n$.
Hence $\ca(\A, WU_\beta) = \A \times_\beta \bbZ$.
Moreover, the nonself-adjoint subalgebra $\A \times_\beta \bbZ^+$ generated by
$\A$ and $\A U_\beta$ coincides with the algebra generated by
$\A$ and $\A U_\beta W$.  But this latter algebra is canonically
identified with $\A \times_\alpha \bbZ^+$ via the identification of
$\ca(\A, WU_\beta)$ with $\A \times_\alpha \bbZ$.
\end{proof}

Note that $(\A, \alpha)$ and $(\B, \beta)$ are outer conjugate if  and only
if there exists a C*-isomorphism $\gamma : \A \rightarrow \B$ so that
the automorphism
$a\circ \gamma^{-1}\circ\beta^{-1}\circ\gamma$ is inner. A
notion weaker than that of outer congucacy arises from
the concept of a universally weakly inner automorphism. We say an automorphism
$a$ of $\A$ is \textit{universally weakly inner} with
respect to irreducible (resp. faithful)
representations,
if for any irreducible (resp. faithful)
representation $\pi$ of $\A$ on $\H$, there exists a unitary $W \in \pi(\A)^{''}$
so that $\pi(\alpha(A))=W^*\pi(A)W$, $A \in \A$.
The concept of a universally weakly inner automorphism
with respect to faithful representations (or $\pi$-inner automorphism) was introduced
by Kadison and Ringrose
\cite{KadR} and has been studied by various authors \cite{Elliott, Lance}.
Here we will be making use of universally weakly inner automorphisms
with respect to \textit{irreducible} representations.
A direct integral decomposition argument shows that the two concepts
coincide for type I C*-algebras.
Kishimoto \cite{Kis} has shown that for a separable simple C*-algebra
all universally weakly inner automorphisms with respect to
irreducible representations are actually inner.
Therefore the two concepts coincide there as well.
 
We also need to recall several facts for the various concepts of
spectrum from C*-algebra theory. Let $(\A, \alpha)$ be a dynamical system and let
$\Prim A$ be the primitive ideal space of $\A$ equipped with the hull-kernel topology.
For each cardinal $i$, we fix a Hilbert space $\H_i$ with dimension $i$. Then,
$\irred(\A, \H_i)$ is the collection of all irreducible representations on $\H_i$
and $\irred\A\equiv \cup_i\, \irred(\A, \H_i)$. If $\rho \in \irred \A$, then
$[\rho]$ denotes its equivalence class, with respect to unitary equivalence $\simeq$
between representations on the same Hilbert space. Let the \textit{spectrum} of $\A$ be
\[
 \hat{\A}\equiv  \big\{ [\rho] \mid  \rho \in \irred \A \big\}
\]
and consider the canonical map
\[
 \theta :  \hat{\A} \longrightarrow \Prim \A : [\rho] \longrightarrow \ker\rho.
\]
In what follows we always consider $\hat{\A}$ equiped with the
smallest topology that makes
$\theta$ continuous.
For any C*-isomorphism $\gamma: \A \rightarrow \B$, we define a map
$\hat{\gamma}: \hat{\A} \rightarrow \hat{\B}$ between the
corresponding spectra by the formula $\hat{\gamma}([\rho])= [\rho\circ \gamma]$.

The following result is straightforward.

\begin{lem} \label{theta}
Let $\A$ be a C*-algebra and let $\X\subseteq \hat{\A}$ have empty interior. Then,
\[
 \bigcap_{x \in \hat{\A}\setminus \X}\theta(x)=\{0\}.
\]
\end{lem}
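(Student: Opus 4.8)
The plan is to push the question down from $\hat{\A}$ to $\Prim\A$ via $\theta$ and then invoke semisimplicity of C*-algebras. First I would record that, since $\X$ has empty interior, its complement $D := \hat{\A}\setminus\X$ is dense in $\hat{\A}$. Next I would use the defining property of the topology on $\hat{\A}$: it is the initial (pullback) topology for the single map $\theta$, so every open subset of $\hat{\A}$ is of the form $\theta^{-1}(W)$ for some open $W\subseteq\Prim\A$; moreover $\theta$ is surjective, because every primitive ideal is by definition the kernel of an irreducible representation, which may be realized on one of the fixed spaces $\H_i$. These two observations together give: a subset $D\subseteq\hat{\A}$ is dense if and only if $\theta(D)$ is dense in $\Prim\A$, since for a nonempty open $W\subseteq\Prim\A$ one has $D\cap\theta^{-1}(W)\neq\emptyset$ exactly when $\theta(D)\cap W\neq\emptyset$. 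Hence $\theta(D)$ is dense in $\Prim\A$.

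Then I would rewrite the intersection in question as
\[
 \bigcap_{x\in\hat{\A}\setminus\X}\theta(x) \;=\; \bigcap_{P\in\theta(D)}P \;=:\; I,
\]
an ideal of $\A$. By the description of the hull--kernel topology, the closure of $\theta(D)$ in $\Prim\A$ is precisely the hull of $I$, namely $\{Q\in\Prim\A : Q\supseteq I\}$. Since $\theta(D)$ is dense, this hull is all of $\Prim\A$, so $I$ is contained in every primitive ideal of $\A$; that is, $I\subseteq\bigcap_{P\in\Prim\A}P=\rad\A$. Because C*-algebras are semisimple (equivalently, by the Gelfand--Naimark theorem they admit a separating family of irreducible representations), $\rad\A=\{0\}$, and therefore $I=\{0\}$, which is the assertion of the lemma.

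I do not expect a genuine obstacle here. The only point that needs a little care is the first reduction: verifying that density in $\hat{\A}$ transfers to density of the image in $\Prim\A$. This rests solely on the fact that the topology of $\hat{\A}$ is pulled back along the \emph{surjection} $\theta$ — continuity and surjectivity suffice, and openness of $\theta$ is not needed. Everything after that is just the standard semisimplicity of C*-algebras combined with the defining property of the Jacobson topology on $\Prim\A$.
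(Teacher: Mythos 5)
Your argument is correct, and there is nothing in the paper to compare it with: the authors simply declare the lemma ``straightforward'' and give no proof. Your write-up is exactly the standard argument they presumably had in mind --- the complement of $\X$ is dense, density passes to $\theta(\hat{\A}\setminus\X)$ in $\Prim\A$ because the topology on $\hat{\A}$ is the initial topology of the surjection $\theta$, and then the hull--kernel description of closures together with $\bigcap_{P\in\Prim\A}P=\{0\}$ (semisimplicity) finishes the proof --- with the one genuinely delicate point (transferring density along $\theta$ without needing openness of $\theta$) handled correctly.
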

 
Another notion of spectrum is the Connes spectrum. We do not give the precise definition
but instead state the fact \cite[Theorem 10.4]{OPed} that for a separable C*-algebra,
$(\A, \alpha)$ has full Connes spectrum if and only if there is
a dense $\alpha$-invariant subset $\Delta_{\alpha} \subseteq \hat{\A}$ on
which $\hat{\alpha}$ is freely acting.  This is equivalent to the fact that
the periodic points of $\hat{\alpha}$ with period $n$ has no interior for any $n \ge1$.

\section{The main result}

We begin this section with a general result about isometric isomorphisms
between arbitrary operator algebras which is well-known.
 
\begin{prop}   \label{diag}
Let $\phi: \fA\rightarrow \fB$ be an isometric isomorphism between operator algebras.
Then $\phi(\fA \cap \fA^*)= \fB \cap \fB^*$ and $\phi\vert_{\fA \cap \fA^*}$
is a C*-isomorphism.
\end{prop}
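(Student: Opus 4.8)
The plan is to give an \emph{intrinsic} description of the diagonal $\fA\cap\fA^*$ --- one phrased purely in terms of the norm and the algebraic operations of $\fA$ --- and then to observe that an isometric isomorphism automatically respects it. First I would reduce to the unital case. If $\fA$ is not unital, pass to the unitisation $\fA^1=\fA+\bbC 1$; since such operator algebras have a contractive approximate identity $(e_\lambda)$, the norm of $\fA^1$ is intrinsic, as $\|x\|_{\fA^1}=\sup_\lambda\|x e_\lambda\|$ for $x\in\fA^1$, so $\phi$ extends to an isometric unital isomorphism $\fA^1\to\fB^1$, while $\fA^1\cap(\fA^1)^*$ only adds the scalars to $\fA\cap\fA^*$. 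So assume $\fA\subseteq\B(\H)$ and $\fB\subseteq\B(\K)$ are unital operator algebras; then necessarily $1_\fA=I_\H$ and $1_\fB=I_\K$, and $\phi$ is automatically unital because $\phi(1_\fA)$ is an identity for $\fB$.

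The heart of the matter is the following characterisation of the self-adjoint (\emph{hermitian}) elements: for $h\in\fA$ one has $h=h^*$ if and only if $\|\exp(ith)\|\le 1$ for every $t\in\bbR$. Observe that $\exp(ith)=\sum_{n\ge 0}(ith)^n/n!$ lies in $\fA$, since $\fA$ is norm closed and unital. If $h=h^*$ then $\exp(ith)$ is a unitary of $\B(\H)$, hence of norm $1$. Conversely, if $\|\exp(ith)\|\le 1$ for all real $t$, applying this with $t$ and with $-t$ shows that $\exp(ith)$ and its inverse $\exp(-ith)$ are both contractions; an invertible operator whose inverse is also a contraction is a unitary (from $\|\xi\|=\|\exp(-ith)\exp(ith)\xi\|\le\|\exp(ith)\xi\|\le\|\xi\|$ one reads off that $\exp(ith)$ is an isometry, and it is onto). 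Differentiating $\exp(ith)^*\exp(ith)=I$ at $t=0$ gives $i(h-h^*)=0$, i.e.\ $h=h^*$, proving the claim. Since the condition ``$\|\exp(ith)\|\le1$ for all $t\in\bbR$'' refers only to the norm and the algebra operations, and $\phi$ is an isometric unital homomorphism, one has $\phi(\exp(ith))=\exp(it\phi(h))$ and therefore $\|\exp(it\phi(h))\|_\fB=\|\exp(ith)\|_\fA$; hence $\phi$ carries $\fA_{\mathrm{sa}}:=\{h\in\fA:h=h^*\}$ bijectively onto $\fB_{\mathrm{sa}}$.

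It remains to assemble the statement. If $a\in\fA\cap\fA^*$ then $a^*\in\fA$, so $\tfrac12(a+a^*)$ and $\tfrac1{2i}(a-a^*)$ lie in $\fA_{\mathrm{sa}}$; thus $\fA\cap\fA^*=\fA_{\mathrm{sa}}+i\,\fA_{\mathrm{sa}}$, and likewise for $\fB$. As $\phi$ is complex-linear and maps $\fA_{\mathrm{sa}}$ onto $\fB_{\mathrm{sa}}$, it maps $\fA\cap\fA^*$ onto $\fB\cap\fB^*$. Moreover, writing $a=h+ik$ with $h,k\in\fA_{\mathrm{sa}}$, we get $\phi(a^*)=\phi(h)-i\phi(k)=\phi(a)^*$ because $\phi(h)$ and $\phi(k)$ are self-adjoint; so $\phi|_{\fA\cap\fA^*}$ is a bijective $*$-homomorphism between C*-algebras, that is, a C*-isomorphism. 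The step I expect to demand the most care is the intrinsic characterisation of self-adjointness --- in particular ensuring that $\exp(ith)$ genuinely lies in $\fA$ (which is precisely why one first adjoins a unit) and handling the ``contractive inverse implies unitary'' implication cleanly --- together with the routine but not wholly trivial passage to the unitisations; the remainder is bookkeeping.
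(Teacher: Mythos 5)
Your unital-case argument is correct and takes a genuinely different route from the paper's. The paper characterizes the unitaries of the diagonal intrinsically (the norm-one elements of $\fA$ having an inverse in $\fA$ of norm one), concludes that $\phi$ maps the unitary group of $\fA\cap\fA^*$ onto that of $\fB\cap\fB^*$, and then uses the span of unitaries together with the fact that adjoints of unitaries are inverses; you instead characterize the self-adjoint elements in the Vidav--Palmer style, $h=h^*$ iff $\|\exp(ith)\|\le 1$ for all real $t$, and split the diagonal into real and imaginary parts. Both are intrinsic descriptions that an isometric unital isomorphism must preserve, and your verification of the exponential criterion (an invertible contraction with contractive inverse is unitary; differentiate $\exp(ith)^*\exp(ith)=I$ at $t=0$) is sound. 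Either way the heart of the matter is the same: find a norm-algebraic description of enough of the diagonal.

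The genuine weak point is your reduction to the unital case. It is false that every operator algebra has a contractive approximate identity (the nilpotent algebra $\bbC E_{12}\subset M_2(\bbC)$ has no approximate identity at all), so the asserted intrinsic formula $\|x\|_{\fA^1}=\sup_\lambda\|xe_\lambda\|$ is unavailable in general; even when a contractive approximate identity exists, that formula needs the representation to act nondegenerately, and the claim that ``necessarily $1_\fA=I_\H$'' requires first noting that a norm-one idempotent is a self-adjoint projection and compressing to its range. For the statement as literally given (arbitrary operator algebras) the correct repair is Meyer's unitization theorem: every contractive homomorphism of an operator algebra extends to a unital contractive homomorphism of the unitization, so $\phi$ and $\phi^{-1}$ extend and the extension is isometric with no approximate-identity hypothesis. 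Alternatively, your cai argument does cover the algebras to which the paper actually applies the proposition, since a semicrossed product inherits a contractive approximate identity from $\A$. Also, the side claim $\fA^1\cap(\fA^1)^*=(\fA\cap\fA^*)+\bbC 1$ is asserted without proof; it is true but not obvious, and fortunately you never need it: once the unital statement is known, $a,a^*\in\fA$ gives $\phi(a)\in\fB$ and $\phi(a)^*=\phi(a^*)\in\fB$, which is all the proposition asks. (In fairness, the paper's own proof, phrased in terms of inverses and unitaries, also implicitly assumes unitality, so your proof with the reduction step repaired is if anything more complete.)
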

 
\begin{proof}
The unitary operators in $\fA \cap \fA^*$ are characterized as the norm 1 elements
$A \in \A$ so that $A^{-1} \in \fA$ and $\| A^{-1}\|=1$.
 From this it follows that $\phi$
maps the unitary group of $\fA \cap \fA^*$ onto the unitary group
of $\fB \cap \fB^*$, and this proves the first assertion.
The second follows from the fact that $\phi$ preserves inverses of unitaries in
$\fA \cap \fA^*$ and hence adjoints.
\end{proof}
 
In order to prove the main result, we need to use representation theory.
In light of Proposition \ref{diag}, it suffices to consider representations
that preserve the diagonal. Hence, if $\fA$ is an operator algebra,
then $\rep(\fA, \H)$ will denote the collection of all contractive representations of
$\fA$ on $\H$ whose restriction on the diagonal $\fA \cap\fA^*$ is a $*$-homomorphism.

\begin{defn}  \label{nestrepn}
If $\fA$ is an operator algebra and $\H$ a Hilbert space,
then $\rep_2(\fA, \H)$ will denote the collection of
all (contractive) representations $\rho \in \rep(\fA, \H \oplus \H)$ of the form
\[
\rho(A)=
\begin{bmatrix} \rho_1(A) & \rho_3(A) \\  0 & \rho_2(A) \end{bmatrix} \qforal A \in \fA,
\]
so that $\rho_i \vert_{\fA \cap \fA^*} \in \irred(\fA \cap \fA^*, \H)$
for $i=1,2$ and $\rho_3(\fA)\neq \{0\}$.
\end{defn}

\begin{lem} \label{basic}
Let  $(\A, \alpha)$ be a C*-dynamical system and let
\[
\rho \in \rep_2(\A\times_{\alpha}\bbZ^+, \H).
\]
Then, $\rho_1\vert_{\A} \simeq \rho_2 \vert_{\A}\circ\alpha$.
\end{lem}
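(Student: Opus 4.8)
The plan is to extract the relation between $\rho_1|_\A$ and $\rho_2|_\A$ from the covariance relation $AU_\alpha = U_\alpha\alpha(A)$ holding in $\fA := \A\times_\alpha\bbZ^+$, and then to finish with Schur's Lemma. The first step is to see how $\rho$ acts on the diagonal. Since the diagonal $\fA\cap\fA^*$ of $\fA$ is the canonical copy of $\A$, the restriction $\rho|_\A$ is a $*$-homomorphism and $\rho_1|_\A,\rho_2|_\A\in\irred(\A,\H)$. For $A\in\A$ the operator $\rho(A)$ is block upper triangular, being in the range of $\rho$, while $\rho(A^*)=\rho(A)^*$ is block lower triangular; hence $\rho(A)$ is block diagonal, so $\rho_3|_\A=0$ and $\rho(A)=\rho_1(A)\oplus\rho_2(A)$. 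Because $\rho_1,\rho_2$ are nondegenerate on $\A$, $\rho$ is nondegenerate on $\fA$, so $V:=\rho(U_\alpha)$ is a well-defined bounded operator on $\H\oplus\H$ — directly, or via the extension of $\rho$ to multipliers — satisfying $\rho(U_\alpha^nA)=V^n\rho(A)$ for all $n\ge0$ and $A\in\A$, and block upper triangular with diagonal entries $V_1,V_2$ and $(1,2)$-entry $V_3=\rho_3(U_\alpha)$.

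The key — and essentially the only nontrivial — step is to show $V_3\ne0$. Suppose not. Then $V=V_1\oplus V_2$ and $\rho(A)=\rho_1(A)\oplus\rho_2(A)$ are both block diagonal, so every $\rho(U_\alpha^nA)=V^n\rho(A)$ is block diagonal; as these elements span a dense subalgebra of $\fA$ and the map $x\mapsto\rho_3(x)$ is continuous, it follows that $\rho_3\equiv0$ on $\fA$, contradicting $\rho_3(\fA)\ne\{0\}$. Hence $V_3\ne0$.

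It remains to invoke Schur's Lemma. Applying $\rho$ to $AU_\alpha=U_\alpha\alpha(A)$ and comparing $(1,2)$-entries, while using $\rho_3|_\A=0$, yields $\rho_1(A)V_3=V_3\rho_2(\alpha(A))$ for all $A\in\A$. Thus $V_3$ intertwines $\rho_2|_\A\circ\alpha$ with $\rho_1|_\A$, and both of these are irreducible representations of $\A$ on $\H$ — the former because $\alpha$ is onto, so $\rho_2(\alpha(\A))=\rho_2(\A)$ has commutant $\bbC I$. Taking adjoints in the intertwining relation one checks that $V_3^*V_3$ commutes with $\rho_2(\A)$, hence $V_3^*V_3=cI$ with $c>0$ (as $V_3\ne0$); then $W:=c^{-1/2}V_3$ is an isometry satisfying $\rho_1(A)W=W\rho_2(\alpha(A))$, so that $WW^*$ is a nonzero projection in $\rho_1(\A)'=\bbC I$, whence $WW^*=I$ and $W$ is unitary. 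Therefore $\rho_1|_\A\simeq\rho_2|_\A\circ\alpha$, as desired. The one place requiring genuine care is the nonvanishing of $V_3$ in the second step, together with the harmless technical point of making sense of $V=\rho(U_\alpha)$ when $\A$ is non-unital; the intertwining identity and the Schur argument that follow are routine.
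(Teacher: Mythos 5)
Your proof is correct and follows essentially the same route as the paper: show the $(1,2)$-corner of (the extension of) $\rho$ at $U_\alpha$ is nonzero, lest the whole range of $\rho$ be block diagonal, then read off the intertwining relation $\rho_1(A)V_3=V_3\rho_2(\alpha(A))$ from covariance and conclude by irreducibility. The only difference is cosmetic: the paper realizes your $V$ as a weak limit of $\rho(U_\alpha E_j)$ along an approximate unit rather than as a multiplier extension, and it leaves the final Schur-lemma step implicit where you spell it out.
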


\begin{proof} First note that $\rho$ is a $*$-representation on the
diagonal $\A$. Hence for each $A \in \A$, $\rho(A)$ necessarily has
its $(1, 2)$-entry equal to zero, i.e., $\rho(\A)$ is in diagonal form.

Let$\{E_j\}_j$ be an approximate unit for $\A$;
and let $X, Y, Z\in \H$ so that $\{\rho(U_{\alpha}E_j)\}_j$
converges weakly to
$\left[\begin{smallmatrix} X &Y \\0&Z \end{smallmatrix}\right]$.
\textit{We claim that $Y\neq0$.}
Indeed, otherwise the equality
\[
 \rho(U_{\alpha}A) = \lim_j \rho(U_{\alpha}E_jA)
 = \lim_j \rho(U_{\alpha}E_j)\rho(A) \qforal A \in \A
\]
would imply that $\rho(U_{\alpha}\A)$ is diagonal; and therefore
$\rho(\A\times_{\alpha}\bbZ^+)$ is in diagonal form,
contradicting the requirement $\rho_{3}(\A\times_{\alpha}\bbZ^+)\neq \{0\}$.

Now notice that for any $A \in \A$, we have
\begin{align*}
\rho(A)\lim_j \rho(U_{\alpha}E_j)& = \lim_j \rho(U_{\alpha}\alpha(A)E_j) \\
                        &=\lim_j(U_{\alpha}E_j)\rho(\alpha(A)).
\end{align*}
Hence in matricial form,
\begin{equation*}
 \begin{bmatrix} \rho_1(A) & 0\\ 0 & \rho_2(A) \end{bmatrix}
 \begin{bmatrix} X& Y \\ 0 & Z \end{bmatrix}
=
 \begin{bmatrix} X & Y \\ 0 & Z \end{bmatrix}
 \begin{bmatrix} \rho_1(\alpha(A))&  0\\  0 & \rho_2(\alpha(A)) \end{bmatrix} .
\end{equation*}
By multiplying and comparing $(1,2)$-entries, we obtain
\[
\rho_1(A)Y=Y\rho_2(\alpha(A)).
\]
Since $Y\neq0$ and $\rho_1\vert_{\A}$ and $\rho_2\vert_{\A}$ are irreducible,
this implies that  $\rho_1\vert_{\A} \simeq\rho_2\vert_{\A}\circ\alpha$, as desired.
\end{proof}

\begin{lem}  \label{itexists}
Let  $(\A, \alpha)$ be a C*-dynamical system, and let
$\sigma$ belong to $\irred(\A, \H)$. Then
there exists a representation $\rho \in \rep_2(\A\times_{\alpha}\bbZ^+, \H)
$ so that $\rho_2\vert_{\A}=\sigma$.
\end{lem}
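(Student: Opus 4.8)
The plan is to realize $\sigma$ as the lower corner of a triangular representation obtained by compressing a standard ``orbit'' representation of $(\A,\alpha)$. Note that Lemma~\ref{basic} already tells us that any $\rho\in\rep_2(\A\times_\alpha\bbZ^{+},\H)$ with $\rho_2|_\A=\sigma$ must satisfy $\rho_1|_\A\simeq\sigma\circ\alpha$, so this is what the construction should produce on the upper corner.

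First I would form $\K=\bigoplus_{n\ge 0}\H_n$, with each $\H_n$ a copy of $\H$, define a $*$-representation $\pi$ of $\A$ on $\K$ acting as $\sigma\circ\alpha^{\,n}$ on $\H_n$, and let $V\in\B(\K)$ be the isometry that carries $\H_n$ onto $\H_{n+1}$ by the identity map. A one-line computation gives $\pi(A)V=V\pi(\alpha(A))$ for every $A\in\A$, so $(\pi,V)$ is an isometric covariant representation of $(\A,\alpha)$; by the universal property of the semicrossed product recalled in the Preliminaries, $\pi\times V$ extends to a contractive representation $\rho$ of $\A\times_\alpha\bbZ^{+}$ on $\K$.

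Next, observe that $\N:=\bigoplus_{n\ge 2}\H_n$ is invariant for $\pi(\A)$ (which is diagonal) and for $V$ (which shifts up), hence invariant for all of $\rho(\A\times_\alpha\bbZ^{+})$. Therefore $\M:=\K\ominus\N=\H_0\oplus\H_1$ is co-invariant, and the compression $\rho_0:=P_\M\,\rho(\cdot)|_\M$ is again a contractive representation of $\A\times_\alpha\bbZ^{+}$; since $\M$ is moreover reducing for $\pi(\A)$, the map $\rho_0$ restricts to a $*$-representation on the diagonal $\A$. Ordering $\M$ as $\H_1\oplus\H_0$ (first summand $\H_1$, second summand $\H_0$), a direct check yields
\[
\rho_0(A)=\begin{bmatrix}\sigma(\alpha(A))&0\\0&\sigma(A)\end{bmatrix}\quad(A\in\A),
\qquad
\rho_0(U_\alpha)=\begin{bmatrix}0&I_\H\\0&0\end{bmatrix}.
\]
Thus $\rho_0$ has the triangular shape of Definition~\ref{nestrepn}: $\rho_{0,1}|_\A=\sigma\circ\alpha$ and $\rho_{0,2}|_\A=\sigma$ are both irreducible (here we use that $\alpha$ is an automorphism, so $\sigma\circ\alpha$ is irreducible whenever $\sigma$ is), and $\rho_{0,3}(U_\alpha)=I_\H\neq 0$ forces $\rho_{0,3}(\A\times_\alpha\bbZ^{+})\neq\{0\}$. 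Hence $\rho_0\in\rep_2(\A\times_\alpha\bbZ^{+},\H)$ with $\rho_{0,2}|_\A=\sigma$, which is the assertion.

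I do not expect a genuine obstacle. The only points requiring a little care are the verification that compressing a contractive representation of an operator algebra to a co-invariant subspace again gives a representation (a routine Sarason-type argument) and the small amount of bookkeeping in choosing the invariant subspaces and the ordering of the two summands of $\M$ so that the \emph{lower} corner equals exactly $\sigma$ and the off-diagonal entry of $\rho_0(U_\alpha)$ is nonzero.
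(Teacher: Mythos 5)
Your proof is correct, and it reaches exactly the same $2\times 2$ triangular representation as the paper, but by a genuinely different route. The paper simply writes down
\[
\rho(A)=\begin{bmatrix}\sigma(\alpha(A))&0\\0&\sigma(A)\end{bmatrix},\qquad
\rho(U_\alpha A)=\begin{bmatrix}0&\sigma(A)\\0&0\end{bmatrix},
\]
and observes that $\bigl(\rho|_{\A},\left[\begin{smallmatrix}0&I\\0&0\end{smallmatrix}\right]\bigr)$ is a covariant pair; since $\left[\begin{smallmatrix}0&I\\0&0\end{smallmatrix}\right]$ is only a partial isometry, this implicitly invokes the universality of $\A\times_\alpha\bbZ^+$ for \emph{contractive} covariant representations (Peters), not just the isometric ones used to define the norm. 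You instead dilate first: you build the isometric orbit representation of $(\A,\alpha)$ on $\bigoplus_{n\ge0}\H_n$ with the shift $V$, get a genuine representation of the semicrossed product from the isometric covariant pair, and then compress to the co-invariant subspace $\H_0\oplus\H_1$, which is legitimate because compression to the orthogonal complement of an invariant subspace is multiplicative and the subspace reduces $\pi(\A)$. What your approach buys is self-containedness: it needs only the definition of the semicrossed product via isometric covariant representations, at the cost of an extra (routine) dilation-and-compression step; the paper's version is shorter but leans on the contractive universal property. One cosmetic point: when $\A$ is non-unital, $U_\alpha$ itself is not an element of $\A\times_\alpha\bbZ^+$, so to verify $\rho_{0,3}\neq 0$ you should evaluate on elements $U_\alpha A$ (the $(1,2)$-entry is $\sigma(A)$, nonzero for suitable $A$ since $\sigma\neq 0$), rather than on $U_\alpha$; this does not affect the argument.
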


\begin{proof}
Let
\[
\rho(A)=
 \begin{bmatrix} \sigma\circ\alpha(A) & 0 \\ 0 & \sigma(A) \end{bmatrix}
 \qand
 \rho(U_{\alpha}A)=
 \begin{bmatrix} 0 & \sigma(A) \\ 0 & 0  \end{bmatrix} .
\]
It is easily verified that
$\big( \rho\vert_{\A}, \left[\begin{smallmatrix} 0 &I \\0&0 \end{smallmatrix}\right] \big)$ is a
covariant representation and so
$\rho$ extends to a representation of $\A\times_{\alpha}\bbZ^+$.
\end{proof}

Note that the representation $\rho$ in Lemma \ref{itexists} satisfies
$\rho_1|_{\A}= \mbox{$\sigma\circ \alpha$}$.
This is not just an artifact of our construction.
By Lemma \ref{basic}, any representation
in $\rho \in \rep_2(\A\times_{\alpha}\bbZ^+, \H)$ will satisfy that property, provided
that $\rho_2\vert_{\A}=\sigma$.

The following general result relates the classification problem
for semicrossed products to the study of universally weakly inner automorphisms for
C*-algebras.

\begin{thm}   \label{main}
Let $(\A, \alpha)$ and $(\B, \beta)$ be C*-dynamical systems, and
assume that the semicrossed products $\A \times_{\alpha} \bbZ^{+}$ and
$\B \times_{\beta} \bbZ^{+}$ are isometrically isomorphic. Then
there exists a C*-isomorphism $\gamma: \A\rightarrow\B$
so that $\alpha\circ\gamma^{-1}\circ \beta^{-1}\circ \gamma$ is
universally weakly inner with respect to irreducible representations.
\end{thm}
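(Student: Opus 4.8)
Let $\phi \colon \A \times_\alpha \bbZ^+ \to \B \times_\beta \bbZ^+$ be the given isometric isomorphism. By Proposition~\ref{diag}, $\phi$ restricts to a C*-isomorphism of the diagonals, and since the diagonal of $\A \times_\alpha \bbZ^+$ is $\A$ (and likewise for $\B$), this gives a C*-isomorphism $\gamma \colon \A \to \B$ with $\phi|_\A = \gamma$. Set $\delta := \alpha \circ \gamma^{-1} \circ \beta^{-1} \circ \gamma$, an automorphism of $\A$; the goal is to show $\delta$ is universally weakly inner with respect to irreducible representations. So fix an irreducible representation $\sigma \in \irred(\A, \H)$; I must produce a unitary $W \in \sigma(\A)''$ with $\sigma(\delta(A)) = W^* \sigma(A) W$ for all $A \in \A$.

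\emph{Transporting a $\rep_2$-representation across $\phi$.} The idea is to feed the right $\sigma$-datum into Lemma~\ref{itexists} on the $\B$-side and pull it back. Precisely, apply Lemma~\ref{itexists} to the dynamical system $(\B, \beta)$ and the irreducible representation $\sigma \circ \gamma^{-1} \in \irred(\B, \H)$ to obtain $\rho' \in \rep_2(\B \times_\beta \bbZ^+, \H)$ with $\rho'_2|_\B = \sigma \circ \gamma^{-1}$. Then $\rho := \rho' \circ \phi$ is a contractive representation of $\A \times_\alpha \bbZ^+$ on $\H \oplus \H$. Because $\phi$ carries $\A$ onto $\B$ via $\gamma$, the representation $\rho|_\A = \rho'|_\B \circ \gamma$ is again block upper-triangular with the same off-diagonal corner being nonzero on $\A \times_\alpha \bbZ^+$ (here one uses that $\phi$, being an isomorphism, does not kill the relevant off-diagonal part — spelling this out is a small check using that $\rho'_3(\B\times_\beta\bbZ^+)\neq\{0\}$ and $\phi$ is onto). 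Thus $\rho \in \rep_2(\A \times_\alpha \bbZ^+, \H)$, and reading off the diagonal entries, $\rho_2|_\A = (\sigma \circ \gamma^{-1}) \circ \gamma = \sigma$ while $\rho_1|_\A = \rho'_1|_\B \circ \gamma$.

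\emph{Applying Lemma~\ref{basic} twice.} On the $\B$-side, Lemma~\ref{basic} (and the remark following Lemma~\ref{itexists}) gives $\rho'_1|_\B \simeq (\rho'_2|_\B) \circ \beta = (\sigma \circ \gamma^{-1}) \circ \beta$, so
\[
\rho_1|_\A \;=\; \rho'_1|_\B \circ \gamma \;\simeq\; \sigma \circ \gamma^{-1} \circ \beta \circ \gamma .
\]
On the $\A$-side, Lemma~\ref{basic} applied to $\rho \in \rep_2(\A \times_\alpha \bbZ^+, \H)$ gives $\rho_1|_\A \simeq (\rho_2|_\A) \circ \alpha = \sigma \circ \alpha$. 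Combining the two, $\sigma \circ \alpha \simeq \sigma \circ \gamma^{-1} \circ \beta \circ \gamma$, i.e., there is a unitary $U \in \B(\H)$ with $\sigma(\alpha(A)) = U^*\, \sigma(\gamma^{-1}\beta\gamma(A))\, U$ for all $A$. Replacing $A$ by $\gamma^{-1}\beta^{-1}\gamma(A)$, this says precisely $\sigma(\delta(A)) = U^* \sigma(A) U$ for all $A \in \A$.

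\emph{Landing the unitary in the von Neumann algebra.} It remains to see that $U$ can be chosen in $\sigma(\A)'' = \B(\H)$ — but since $\sigma$ is irreducible, $\sigma(\A)'' = \B(\H)$, so $U$ is automatically there, and $\delta$ is universally weakly inner with respect to irreducible representations. The main obstacle I anticipate is the bookkeeping in the second paragraph: verifying carefully that $\rho' \circ \phi$ genuinely lands in $\rep_2$ — that is, that conjugating the block-triangular form of $\rho'$ by the isomorphism $\phi$ (which need not be block-diagonal in any naive sense, but does respect the diagonal subalgebra) preserves both the upper-triangular shape and the nonvanishing of the $(1,2)$-corner. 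This hinges on the structural fact that $\phi$ maps $\A$ onto $\B$ and is a bijection of the full semicrossed products, so no nontrivial part of the algebra is annihilated; once that is pinned down, the rest is a formal chain of unitary equivalences.
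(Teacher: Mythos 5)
Your argument is correct and is essentially the paper's own proof, run in the mirror direction: the paper builds the $\rep_2$-representation on the $\A$-side via Lemma~\ref{itexists} (with $\rho_2|_\A=\sigma$, $\rho_1|_\A=\sigma\circ\alpha$) and transports it to the $\B$-side before applying Lemma~\ref{basic}, whereas you build it on the $\B$-side for $\sigma\circ\gamma^{-1}$ and pull it back through $\phi$, applying Lemma~\ref{basic} on the $\A$-side; the two are formally equivalent. Your careful check that $\rho'\circ\phi$ lands in $\rep_2$ (surjectivity of $\phi$ guarantees the $(1,2)$-corner is not annihilated, and $\phi|_\A=\gamma$ keeps the diagonal restriction a $*$-homomorphism) is exactly the verification the paper leaves implicit when asserting $\rho\circ\gamma^{-1}\in\rep_2(\B\times_\beta\bbZ^+,\H)$.
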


\begin{proof}Assume that there exists an isometric isomorphism
\[
\gamma: \A \times_{\alpha} \bbZ^{+}\longrightarrow \B \times_{\beta} \bbZ^{+}.
\]
By Proposition \ref{diag}, $\gamma\vert_{\A}$ is a $*$-isomorphism onto $\B$,
which we will also denote by $\gamma$; this is the promised isomorphism.
Indeed $\gamma$ establishes a correspondence
\[
\irred(\A) \ni \sigma \longrightarrow \sigma \circ \gamma^{-1}\in \irred(\B)
\]
that preserves equivalence classes. To show that $\alpha\circ\gamma^{-1}\circ \beta^{-1}\circ \gamma$ is
universally weakly inner with respect to irreducible representations, it is enough to show
that
\[
\sigma \circ\alpha\circ\gamma^{-1} \simeq \sigma \circ \gamma^{-1} \circ \beta
\]
for any $\sigma \in \irred(\A)$.

By Lemma \ref{itexists},
there exists $\rho \in \rep_2(\A\times_{\alpha}\bbZ^+, \H)$
so that $\rho_2\vert_{\A}=\sigma$. But then
\[
\rho\circ \gamma^{-1} \in  \rep_2(\B\times_{\beta}\bbZ^+, \H).
\]
Hence, Lemma \ref{nestrepn} implies that
\[
(\rho\circ\gamma^{-1})_1\vert_{\B} \simeq (\rho\circ\gamma^{-1})_2 \vert_{\B} \circ \beta
\]
or, once again, $\sigma\circ \alpha\circ\gamma^{-1}
\simeq \sigma\circ\gamma^{-1}\circ \beta$ .
\end{proof}

We have arrived to the main result of the paper.

\begin{thm} \label{Main}
Let $(\A, \alpha)$ and $(\B, \beta)$ be C*-dynamical systems and
assume that $\A$ is a separable simple C*-algebra.
The semicrossed products $\A \times_{\alpha} \bbZ^{+}$ and
$\B \times_{\beta} \bbZ^{+}$ are isometrically isomorphic
if and only if the dynamical systems $(\A, \alpha)$ and $(\B, \beta)$
are outer conjugate.
\end{thm}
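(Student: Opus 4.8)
The plan is to combine the two halves already available in the excerpt. For the easy direction, if $(\A,\alpha)$ and $(\B,\beta)$ are outer conjugate, then Proposition~\ref{onedir} gives an isometric isomorphism $\A\times_\alpha\bbZ^+\cong\B\times_\beta\bbZ^+$ directly, with no simplicity hypothesis needed. So the content is entirely in the converse. First I would invoke Theorem~\ref{main}: assuming an isometric isomorphism of the semicrossed products, there is a C*-isomorphism $\gamma:\A\to\B$ such that the automorphism $\delta:=\alpha\circ\gamma^{-1}\circ\beta^{-1}\circ\gamma$ is universally weakly inner with respect to irreducible representations of $\A$.

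The key step is then to upgrade "universally weakly inner with respect to irreducible representations" to "inner". Here the simplicity and separability of $\A$ are used: I would cite Kishimoto's theorem \cite{Kis} (already flagged in the introduction), which says precisely that for a separable simple C*-algebra every automorphism that is universally weakly inner with respect to irreducible representations is in fact inner. Applying this to $\delta$, there is a unitary $W$ in $M(\A)$ (if $\A$ is unital, $W\in\A$; in general $\delta=\ad_W$ is implemented by a multiplier unitary) with $\delta=\ad_W$. Unwinding the definition of $\delta$ gives
\[
\alpha = \ad_W\circ\gamma^{-1}\circ\beta\circ\gamma,
\]
which is exactly the defining relation for $(\A,\alpha)$ and $(\B,\beta)$ being outer conjugate.

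The only genuine obstacle is the application of Kishimoto's theorem, and strictly speaking this is not an obstacle at all once one appeals to \cite{Kis}; everything else is bookkeeping. One small point worth checking is that the unitary $W$ implementing an inner automorphism of a non-unital separable simple C*-algebra lives in the multiplier algebra $M(\A)$, as required by the definition of outer conjugacy — this is standard, since $\ad_W$ being an automorphism of $\A$ forces $W\in M(\A)$ up to the usual identifications. With that settled, the forward direction is complete, and combined with Proposition~\ref{onedir} the theorem follows.
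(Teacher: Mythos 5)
Your proposal is correct and follows essentially the same route as the paper: the easy direction via Proposition~\ref{onedir}, and the converse by combining Theorem~\ref{main} with Kishimoto's theorem to upgrade universal weak innerness to innerness, which unwinds to the outer conjugacy relation $\alpha=\ad_W\circ\gamma^{-1}\circ\beta\circ\gamma$.
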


\begin{proof}One direction follows from Proposition \ref{onedir}.
Conversely assume that $\A \times_{\alpha} \bbZ^{+}$ and
$\B \times_{\beta} \bbZ^{+}$ are isometrically isomorphic.
Theorem \ref{main} shows that there exists a C*-isomorphism $\gamma: \A\rightarrow\B$
so that $\alpha\circ\gamma^{-1}\circ \beta^{-1}\circ \gamma$ is
universally weakly inner with respect to irreducible representations.
The conclusion follows now from Kishimoto's result \cite[Corollary 2.3]{Kis}.
\end{proof}

\section{Representations and dynamics on the spectrum}

As we mentioned in the introduction, an earlier
result of Muhly and Solel \cite[Theorem 4.1]{MS} implies the validity of
Theorem \ref{Main} for arbitrary $\ca$-algebras,
provided that the automorphisms $\alpha$ and $\beta$ have full Connes spectrum.
In what follows we present an
alternative proof of that result of Muhly and Solel,
based on the ideas developed in this paper.

Let $\hat{\Phi}_{\alpha}$ denote the
canonical expectation from $\A \times_{\alpha} \bbZ$ onto $\A$ and let
$\Phi_{\alpha}$ denote its restriction on the semicrossed product
$\A \times_{\alpha} \bbZ^+$.
The following is the key step in their proof.

\begin{lem}
Let $(\A, \alpha)$ and $(\B, \beta)$ be separable
C*-dynamical systems and let $\gamma :\A \times_{\alpha} \bbZ^{+} \rightarrow
\B \times_{\beta} \bbZ^{+}$ be an isometric
isomorphism. If $\alpha$ has full Connes spectrum,
then $\gamma(\ker \Phi_{\alpha})= \ker \Phi_{\beta}$.
\end{lem}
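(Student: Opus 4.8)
The plan is to characterize $\ker\Phi_\alpha$ intrinsically, in a way that is preserved by any isometric isomorphism, and then to use the full Connes spectrum hypothesis to verify that $\ker\Phi_\beta$ has the same intrinsic description on the target side. The natural candidate for the intrinsic characterization is the following: an element $T \in \A\times_\alpha\bbZ^+$ lies in $\ker\Phi_\alpha$ if and only if $\pi(T)$ is strictly upper triangular (zero diagonal block) for ``enough'' representations of the form built in Lemma~\ref{itexists}. More precisely, given $\sigma\in\irred(\A,\H)$, the representation $\rho$ of Lemma~\ref{itexists} satisfies $\rho(A) = \diag(\sigma\alpha(A),\sigma(A))$ and $\rho(U_\alpha A)$ is strictly upper triangular; hence $\rho_3 = (\mathrm{P}_1\rho(\cdot)\mathrm{P}_2)$ annihilates $\A$ but is nonzero on $\A U_\alpha$, and more generally $\rho_3$ vanishes precisely on the closure of $\A$ inside the semicrossed product, which is exactly $\ker\Phi_\alpha$ restricted to the subalgebra generated by $\A$ and $\A U_\alpha$. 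So first I would show that
\[
 \ker\Phi_\alpha = \bigcap_{\sigma\in\irred(\A)} \ker\rho_3^{\sigma},
\]
where $\rho^\sigma$ is the representation in $\rep_2$ constructed from $\sigma$; the inclusion ``$\subseteq$'' is immediate since $\rho_3^\sigma$ kills $\A$, and for ``$\supseteq$'' one uses that $\Phi_\alpha(T) \neq 0$ means $\sigma(\Phi_\alpha(T)) \neq 0$ for some $\sigma\in\irred(\A)$, together with a Fejér-type argument to see that this forces a nonzero diagonal block in some $\rho^\sigma(T)$ after a small modification. The subtlety is that an arbitrary $\rho\in\rep_2$ need not decompose $T$ along the grading, so one should instead work with representations that also respect the gauge action, i.e.\ come from the C*-crossed product.

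Second, I would transport this through $\gamma$. Since $\gamma$ is an isometric isomorphism restricting to a $*$-isomorphism $\gamma:\A\to\B$ on the diagonals, $\gamma$ carries $\rep_2(\B\times_\beta\bbZ^+,\H)$ to $\rep_2(\A\times_\alpha\bbZ^+,\H)$ by $\rho\mapsto\rho\circ\gamma$, so if $\ker\Phi_\alpha$ admits the above description via all of $\rep_2$, then $\gamma(\ker\Phi_\alpha)$ is automatically $\bigcap_{\tau\in\irred(\B)}\ker(\rho^\tau)_3$ computed for $\B\times_\beta\bbZ^+$, which is a $\gamma$-independent object. Thus the content is entirely in proving the two equalities
\[
 \ker\Phi_\alpha = \bigcap_{\sigma}\ker\rho_3^\sigma
 \qand
 \ker\Phi_\beta = \bigcap_{\tau}\ker\rho_3^\tau ,
\]
and here is exactly where full Connes spectrum of $\alpha$ enters: by the fact quoted from \cite[Theorem~10.4]{OPed}, there is a dense $\hat\alpha$-invariant subset $\Delta_\alpha\subseteq\hat\A$ on which $\hat\alpha$ acts freely, and by Lemma~\ref{theta} the ideals $\theta(x)$, $x\in\Delta_\alpha$, have trivial intersection. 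Free action of $\hat\alpha$ at $x=[\sigma]$ means $\sigma\alpha^k\not\simeq\sigma$ for $k\neq 0$, which is precisely what is needed to run a weighted-average (Fejér kernel on $\bbZ$) argument showing that if $\rho_3^\sigma(T)=0$ for all such $\sigma$ then every nonzero Fourier coefficient $\Phi_\alpha(U_\alpha^{*n}T)$ maps to $0$ under all $\sigma$ with $[\sigma]\in\Delta_\alpha$, hence is $0$, hence $T\in\ker\Phi_\alpha$.

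Concretely, for the nontrivial inclusion I would fix $\sigma$ with $[\sigma]\in\Delta_\alpha$ and, for each $n\ge 1$, consider the representation of $\A\times_\alpha\bbZ$ induced from $\sigma$ on $\ell^2(\bbZ,\H)$; truncating to a window of size $N$ and compressing to the first and last coordinates produces an element of $\rep_2$ whose $\rho_3$ block picks out (a corner of) $\sigma(\Phi_\alpha(U_\alpha^{*n}T))$; freeness guarantees the two compression subspaces carry inequivalent subrepresentations of $\A$, so the compression genuinely lies in $\rep_2$ rather than degenerating. Letting $\sigma$ range over $\Delta_\alpha$ and using Lemma~\ref{theta} kills all Fourier coefficients of $T$ of positive degree, i.e.\ $T\in\ker\Phi_\alpha$; the reverse inclusion is trivial. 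The same argument verbatim (no Connes hypothesis needed here, only on the $\alpha$ side, since the equivalence $\rho_3^\sigma\mapsto\rho_3^{\sigma\circ\gamma^{-1}}$ is a bijection) gives $\ker\Phi_\beta = \bigcap_\tau\ker\rho_3^\tau$, and combining the three displayed identities yields $\gamma(\ker\Phi_\alpha)=\ker\Phi_\beta$.

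I expect the main obstacle to be the Fejér/compression step: making precise which finite-dimensional compressions of an induced representation land inside $\rep_2$ (so that $\rho_1,\rho_2$ are genuinely \emph{irreducible} on the diagonal, not merely representations), and checking that the resulting $\rho_3$ blocks really do detect individual Fourier coefficients rather than some entangled combination. This is where separability is used, to reduce to a countable dense set in $\Delta_\alpha$, and where freeness of $\hat\alpha$ is indispensable — without it the compressions collapse and the characterization of $\ker\Phi_\alpha$ fails, which is consistent with the hypothesis being genuinely needed.
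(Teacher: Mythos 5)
Your central identity is false as stated, and it makes the rest of the scheme collapse. For the representation $\rho^\sigma$ of Lemma \ref{itexists} one has, on polynomials $T=\sum_n U_\alpha^nA_n$, that $\rho^\sigma(U_\alpha^nA)=V^n\rho^\sigma(A)$ with $V^2=0$, so $\rho_3^\sigma(T)=\sigma(A_1)$: the corner detects only the degree-one Fourier coefficient. Hence $\bigcap_{\sigma}\ker\rho_3^\sigma$ is the set of elements with vanishing \emph{first} coefficient; it contains all of $\A$ (which is not in $\ker\Phi_\alpha$) and fails to contain $U_\alpha$ (which is). Neither inclusion of your displayed equality holds, and the inclusion you call ``immediate'' is exactly the false one. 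The same confusion reappears at the end: vanishing of all Fourier coefficients of positive degree characterizes membership in $\A$ (the \emph{range} of $\Phi_\alpha$), whereas $\ker\Phi_\alpha=\overline{U_\alpha(\A\times_\alpha\bbZ^+)}$ is vanishing of the \emph{zeroth} coefficient. So even if your Fej\'er/compression machinery were carried out, it would at best recover $\gamma(\A)=\B$, which is Proposition \ref{diag} and is already assumed known; it does not touch the content of the lemma.

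There is a second genuine gap in the transport step. The families of representations you actually use (the $\rho^\sigma$ of Lemma \ref{itexists}, or windowed compressions of representations induced from the crossed product) are defined in terms of $U_\alpha$ and the gauge grading, so they are not intrinsic to the operator algebra; composing with $\gamma$ does not carry the $\A$-side family onto the $\B$-side family, and your claim that $\gamma(\ker\Phi_\alpha)$ is ``automatically'' the corresponding intersection for $\B\times_\beta\bbZ^+$ does not follow. (Your parenthetical that the $\B$-side characterization needs no hypothesis is also unjustified, and sits uneasily with your own remark that freeness is indispensable.) The idea that repairs both defects is the covariance/intertwining argument of Lemma \ref{basic}, which is how the paper proceeds: since $\ker\Phi_\alpha=\overline{U_\alpha(\A\times_\alpha\bbZ^+)}$, it suffices to show $\gamma(U_\alpha)\in\ker\Phi_\beta$; take the single faithful representation $\bigl(\oplus_j\sigma_j\circ\gamma^{-1}\bigr)\times_\beta V$ induced from the dense free set $\Delta_\alpha$ (faithfulness from Lemma \ref{theta}), in which $\hat\Phi_\beta$ is compression to the main diagonal, and note that each diagonal entry of $\gamma(U_\alpha)$ intertwines two irreducible representations of $\B$ which are inequivalent because the corresponding points are aperiodic, the dynamics on spectra being conjugated by $\hat\gamma$ via $\sigma\circ\alpha\circ\gamma^{-1}\simeq\sigma\circ\gamma^{-1}\circ\beta$ (Theorem \ref{main}); hence all diagonal entries vanish. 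In short: over free points a representation automatically kills $U_\alpha$ and thus computes $\Phi_\alpha$ on the diagonal blocks — you should be intersecting kernels of such representations (or reading off diagonal blocks), not the $\rho_3$ corners, and you must invoke the spectral conjugacy from Theorem \ref{main} to handle the $\B$ side.
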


\begin{proof}Let $\Delta_{\alpha} = \{\sigma_j\mid j \in \bbJ\}$ be the dense
$\alpha$-invariant set of aperiodic points described in the introduction.
By Lemma \ref{theta}, $\oplus_{j}\sigma_j$
is a faithful representation of $\A$ and
so $\left(\oplus_{j}\sigma_j\right) \times_{\alpha} V$
defines a faithful representation of
$\A \times_{\alpha} \bbZ$.
Now notice that the
compression on the main diagonal of $\left(\oplus_j \sigma_j\right) \times_{\alpha} V$
defines an expectation from
$\A \times_{\alpha} \bbZ$ onto $\A$ which coincides with $\hat{\Phi}_{\alpha}$.
Similarly, $\left(\oplus_{j}\sigma_j \circ \gamma^{-1}\right) \times_{\beta} V$
defines a faithful representation of $\B \times_{\beta} \bbZ$ and
$\hat{\Phi}_{\beta}$ is the compression on the main diagonal.

Now $\gamma(\A) = \B$ and
$\ker \Phi_{\alpha}=U_{\alpha}\left(\A \times_{\alpha} \bbZ^{+}\right)$. Hence it is enough to show that
$\gamma(U_{\alpha}) \in \ker \Phi_{\beta}$, i.e.,
the diagonal entries of $\gamma(U_{\alpha})$ in the representation
$\left(\oplus_{j}\sigma_j \circ \gamma^{-1}\right) \times_{\beta} V$
of $\B \times_{\beta} \bbZ^{+}$ are equal to 0. To verify this
examine these entries in light of the covariance equation as in Lemma \ref{basic}.
\end{proof}

The rest of the proof follows the same arguments as in \cite{MS}.

\begin{thm}[\cite{MS}]   \label{MS}
Let $(\A, \alpha)$ and $(\B, \beta)$ be separable C*-dynamical systems and
assume that $\alpha$ has full Connes spectrum. The semicrossed
products $\A \times_{\alpha} \bbZ^{+}$ and
$\B \times_{\beta} \bbZ^{+}$ are isometrically isomorphic if and only if
the dynamical systems $(\A, \alpha)$ and $(\B, \beta)$ are outer conjugate.
\end{thm}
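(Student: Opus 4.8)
The plan is to obtain the forward implication for free from Proposition \ref{onedir}, and to prove the converse by showing that an isometric isomorphism $\gamma\colon\A\times_\alpha\bbZ^+\to\B\times_\beta\bbZ^+$ already witnesses outer conjugacy through the $*$-isomorphism $\gamma_0:=\gamma|_\A\colon\A\to\B$ furnished by Proposition \ref{diag}. The key input is the Lemma above: since $\alpha$ has full Connes spectrum, $\gamma(\ker\Phi_\alpha)=\ker\Phi_\beta$, and because $U_\alpha\xi\in\ker\Phi_\alpha$ for every $\xi\in\A$ this gives $\gamma(U_\alpha\xi)\in\ker\Phi_\beta$. The goal is to manufacture from these data a unitary $W\in M(\B)$ with $W\gamma_0(\alpha(A))=\beta(\gamma_0(A))W$ for all $A\in\A$; equivalently $\alpha=\ad_V\circ\gamma_0^{-1}\circ\beta\circ\gamma_0$, where $V=\bar\gamma_0^{-1}(W^*)\in M(\A)$ is a unitary multiplier and $\bar\gamma_0$ is the canonical extension of $\gamma_0$ to multiplier algebras. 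This is precisely the definition of outer conjugacy of $(\A,\alpha)$ and $(\B,\beta)$.

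To isolate ``degree-one'' data I would work inside the isometric inclusion $\A\times_\alpha\bbZ^+\subseteq\A\times_\alpha\bbZ$ and use the contractive Fourier-coefficient maps $\Phi_\alpha^{(n)}$ of the gauge action, whose restrictions to $\A\times_\alpha\bbZ^+$ have range $U_\alpha^n\A$, with $\Phi_\alpha^{(0)}=\Phi_\alpha$; recall also that $\ker\Phi_\alpha=U_\alpha(\A\times_\alpha\bbZ^+)$ and, more generally, $(\ker\Phi_\alpha)^k=U_\alpha^k(\A\times_\alpha\bbZ^+)$ is the ideal of elements of degree $\ge k$. Define $\psi\colon\A\to\B$ by letting $U_\beta\psi(\xi)=\Phi_\beta^{(1)}(\gamma(U_\alpha\xi))$. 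Since the Fourier-coefficient maps are contractions, $\|U_\alpha\xi\|=\|\xi\|$, and $\gamma$ is isometric, $\psi$ is a contraction; and because $\gamma$ also maps $(\ker\Phi_\alpha)^2$ onto $(\ker\Phi_\beta)^2$, the same recipe applied to $\gamma^{-1}$ produces a contractive two-sided inverse of $\psi$ (one checks that $\gamma^{-1}(U_\beta\psi(\xi))$ agrees with $U_\alpha\xi$ modulo terms of degree $\ge 2$, which contribute nothing to the first Fourier coefficient). Hence $\psi$ is an isometric linear bijection of $\A$ onto $\B$.

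Next I would apply $\gamma$ to the identities $A\cdot U_\alpha\xi=U_\alpha\,\alpha(A)\xi$ and $U_\alpha\xi\cdot C=U_\alpha\,\xi C$ and take first Fourier coefficients, using that $\Phi_\beta^{(1)}$ commutes with left and right multiplication by elements of $\B$; this gives the covariance relations
\[
\psi(\alpha(A)\xi)=\beta(\gamma_0(A))\,\psi(\xi)\qand\psi(\xi C)=\psi(\xi)\,\gamma_0(C)\qfor A,\xi,C\in\A.
\]
Feeding an approximate unit $(E_j)$ of $\A$ into the second relation shows $\psi(E_j)$ is strictly Cauchy when multiplied by $\B$ on the right, and into the first that it is strictly Cauchy on the left; so $\psi(E_j)\to W$ strictly for some $W\in M(\B)$ with $Wb=\psi(\gamma_0^{-1}(b))$ and $bW=\psi\bigl(\alpha(\gamma_0^{-1}(\beta^{-1}(b)))\bigr)$ for $b\in\B$. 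Since $\psi$, $\gamma_0$, $\alpha$, and $\beta$ are isometric bijections, left and right multiplication by $W$ are isometric bijections of $\B$; thus $W$ is invertible in $M(\B)$ with $\|W\|=\|W^{-1}\|=1$, hence unitary. Taking $b=\gamma_0(\alpha(A))$ in the first displayed formula for $W$ and $b=\beta(\gamma_0(A))$ in the second yields $W\gamma_0(\alpha(A))=\psi(\alpha(A))=\beta(\gamma_0(A))W$ for all $A\in\A$, which is the relation sought, so $(\A,\alpha)$ and $(\B,\beta)$ are outer conjugate and the theorem follows.

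Granting the Lemma, the step I expect to be the main obstacle is the passage from the purely algebraic covariance relations to a bona fide \emph{unitary} multiplier. Making $\psi$ genuinely isometric (rather than merely contractive) is exactly what forces the cancellation $\|W\|=\|W^{-1}\|=1$, and it is here that one needs the contractivity of the Fourier-coefficient maps on \emph{both} $\gamma$ and $\gamma^{-1}$ together with the identification of $(\ker\Phi)^2$ with the degree-$\ge 2$ ideal; one must also argue with some care in the non-unital case that the strict limit $W$ lies in $M(\B)$ and that invertibility plus isometry of left and right multiplication forces $W^*W=WW^*=1$. The deeper difficulty is of course hidden in the Lemma itself, whose proof uses the full Connes spectrum hypothesis in an essential way to make the off-diagonal entries of $\gamma(U_\alpha)$ vanish in the faithful representation of the crossed product assembled from a dense, freely acting, $\alpha$-invariant subset of $\hat{\A}$.
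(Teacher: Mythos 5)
Your proposal is correct (modulo routine facts you yourself flag, all of which can be filled in), and it takes the forward direction from Proposition \ref{onedir} and the key Lemma exactly as the paper does; but your converse argument is genuinely different from the paper's. The paper, after invoking the Lemma, writes $\gamma(U_{\alpha})=BU_{\beta}+Y$ and $\gamma^{-1}(U_{\beta})=AU_{\alpha}+Z$ with $Y,Z$ of degree $\ge 2$, deduces $\gamma^{-1}(B)A=I$ from uniqueness of Fourier coefficients so that the contractions $A,B$ are unitary, and then kills the tail by a positivity trick: $\hat{\Phi}_{\beta}\bigl(\gamma(U_{\alpha})^*\gamma(U_{\alpha})\bigr)=I+\hat{\Phi}_{\beta}(Y^*Y)\le I$ forces $Y=0$ by faithfulness of the canonical expectation, so $\gamma(U_{\alpha})=BU_{\beta}$ and outer conjugacy follows at once. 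You never show the tail vanishes; instead you package the degree-one Fourier coefficients of $\gamma(U_{\alpha}\xi)$, $\xi\in\A$, into a map $\psi:\A\to\B$, prove it is an isometric bijection using contractivity of the coefficient maps together with $\gamma\bigl((\ker\Phi_{\alpha})^2\bigr)=(\ker\Phi_{\beta})^2=U_{\beta}^2(\B\times_{\beta}\bbZ^+)$, derive the two module/covariance identities $\psi(\alpha(A)\xi)=\beta(\gamma_0(A))\psi(\xi)$ and $\psi(\xi C)=\psi(\xi)\gamma_0(C)$, and obtain the unitary as the strict limit $W=\lim_j\psi(E_j)$ in $M(\B)$ via a double-centralizer argument (invertibility plus $\|W\|=\|W^{-1}\|=1$ gives unitarity, the same characterization underlying Proposition \ref{diag}). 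What each approach buys: the paper's computation is shorter and yields the sharper structural fact that $\gamma$ carries $\A U_{\alpha}$ exactly onto $\B U_{\beta}$, whereas your route, by working with $U_{\alpha}\xi$ and strict limits rather than with $U_{\alpha}$ and $I$ as if they were elements of the algebras, is more scrupulous in the non-unital case, at the cost of extra machinery; both proofs stand or fall with the Lemma, where the full Connes spectrum hypothesis is actually used.
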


\begin{proof}
Assume that there is an isometric isomorphism
$\gamma$ of $\A \times_{\alpha} \bbZ^{+}$ onto $\B \times_{\beta} \bbZ^{+}$.
By the previous Lemma we have
\begin{align*}
 \gamma(U_{\alpha})&= BU_{\beta}+Y,
  \quad\text{where }B \in \B \AND Y \in U_{\beta}^2 (\B \times_{\beta} \bbZ^{+}) ,\\
 \gamma^{-1}(U_{\beta})&= AU_{\alpha} + Z,
  \quad\text{where }A \in \A \AND  Z \in U_{\alpha}^2 (\A \times_{\alpha} \bbZ^{+}).
\end{align*}
Since both $\gamma$ and $\gamma^{-1}$ are isometries, $\|A\|, \|B\|\leq1$.
Also,
\[
 U_{\alpha}= \gamma^{-1}(\gamma(U_{\alpha}))
 =\gamma^{-1}(B)AU_{\beta} +\gamma^{-1}(B)Z+\gamma^{-1}(Y),
\]
which by the uniqueness of the Fourier expansion implies that \break
$\gamma^{-1}(B)A=I$.
Since $A$ and $\gamma^{-1}(B)$ are contractions, they must both be unitary.
Hence $B$ and $\gamma(A)$ are also unitary.
Therefore
\begin{align*}
 \hat{\Phi}_{\beta}(\gamma(U_{\alpha})^*\gamma(U_{\alpha})) &=
 \hat{\Phi}_{\beta}(I + U_\beta^* B^*Y + Y^* B U_\beta + Y^*Y) \\&=
 I+ \hat{\Phi}_{\beta}(Y^*Y)
\end{align*}
since
\[
 U_\beta^* B^*Y \in U_\beta^* \B U_{\beta}^2 (\B \times_{\beta} \bbZ^{+})
 \subseteq U_{\beta} (\B \times_{\beta} \bbZ^{+})  \subseteq \ker\Phi_\beta;
\]
and likewise $Y^* B U_\beta \in (\ker\Phi_\beta)^*\subseteq \ker\hat{\Phi}_\beta$.
So $\hat{\Phi}_{\beta}(Y^*Y)=0$; whence $Y=0$. Hence, $\gamma(U_{\alpha})= BU_{\beta}$
which implies that $(\A, \alpha)$ and $(\B, \beta)$ are outer conjugate.
\end{proof}

We finish the paper with a result of independent interest that associates the fixed points
of $\hat{\alpha}$ to a certain analytic structure in the space of representations
$\rep(\A \times_{\alpha} \bbZ^+, \H)$.

\begin{defn}Let $\fA$ be an operator algebra.
A map
\[
 \Pi :\bbD \equiv \{z \in \bbC\mid |z|<1\}
 \longrightarrow (\rep(\fA, \H), \text{point--\sot})
\]
is called \textit{analytic} if for each $A \in \fA$ and $x,y \in \H$ the map \break
$z \rightarrow \left< \Pi(z)(A)x\mid y\right>$, $z \in \bbD$,
is analytic in the usual sense.
\end{defn}

Assume now that $(\A , \alpha)$ is a C*-dynamical system,
and let
\[
 \Pi :\bbD
 \longrightarrow \rep(\A\times_{\alpha} \bbZ^+, \H)
\]
be an analytic map. Since $\Pi(z)$, $z \in \bbD$, is a $*$-homomorphism on the diagonal,
we may write
\[
 \Pi(z)= \pi_z \times K_z, \quad z \in \bbD,
\]
where $\pi_z=\Pi(z)\vert _{\A}$ and $K_z=\Pi(z)(U_{\alpha})$.
We claim that the map \break
$ z \longrightarrow \pi_z$, $z \in \bbD$,
is constant. Indeed, consider any selfadjoint
operator $A \in \A$ and notice that the map
\[
 \bbD \ni z \longrightarrow \lip \pi_z(A)x, x \rip, \quad x \in \H,
\]
is real analytic and therefore constant, which proves the claim.

\begin{defn}
Let $\fA$ be an operator algebra. An analytic map $\Pi: \bbD \rightarrow
\rep(\fA, \H)$ is said to be \textit{irreducible on the diagonal} if for
any $z \in \bbD$, the representation $\Pi(z)\vert_{\fA \cap \fA^*}$ is irreducible.
\end{defn}

\begin{prop}   \label{analytic}
Let $(\A, \alpha)$ be a C*-dynamical system, and $\sigma$
belong to $\irred (\A, \H)$.
Then $\sigma \simeq \sigma \circ \alpha$ if and only
if there exists a non-constant analytic and irreducible on the diagonal
map
\[
 \Pi: \bbD \longrightarrow \rep(\A \times_{\alpha} \bbZ^+, \H)
\]
so that
\[
 \Pi(z)\vert_{\A } = \sigma
\]
for some $z \in \bbD$.
\end{prop}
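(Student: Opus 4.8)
The plan is to prove the two implications separately, with the labor very unevenly distributed: the ``only if'' direction is an explicit construction of an analytic family, while the ``if'' direction extracts a unitary intertwiner from the data of such a family by an irreducibility argument.

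For the ``only if'' direction, assume $\sigma \simeq \sigma\circ\alpha$ and fix a unitary $W$ on $\H$ with $\sigma(\alpha(A)) = W^*\sigma(A)W$, equivalently $\sigma(A)W = W\sigma(\alpha(A))$ for all $A \in \A$. For each $z \in \bbD$ I would consider the pair $(\sigma, zW)$: since $W$ is unitary and $|z|<1$, $zW$ is a contraction, and the covariance identity $\sigma(A)(zW) = (zW)\sigma(\alpha(A))$ holds by the choice of $W$, so $(\sigma, zW)$ is a (contractive) covariant representation of $(\A,\alpha)$. Exactly as in Lemma~\ref{itexists}, it extends to a contractive representation $\Pi(z) := \sigma\times(zW) \in \rep(\A\times_\alpha\bbZ^+, \H)$ with $\Pi(z)|_\A = \sigma$ for every $z$ (so in particular for ``some'' $z$) and $\Pi(z)(U_\alpha) = zW$. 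This $\Pi$ is irreducible on the diagonal because $\Pi(z)|_\A = \sigma \in \irred(\A,\H)$, and it is non-constant since $W \neq 0$ makes $z \mapsto \lip \Pi(z)(U_\alpha)x, y\rip = z\lip Wx, y\rip$ non-constant for suitable $x,y \in \H$. Analyticity is the only point needing a word of care: on a polynomial $P = \sum_n U_\alpha^n A_n$ one has $\Pi(z)(P) = \sum_n z^n W^n\sigma(A_n)$, so $z \mapsto \lip \Pi(z)(P)x, y\rip$ is a polynomial in $z$; for general $T \in \A\times_\alpha\bbZ^+$ one approximates $T$ by polynomials $P$ and uses $\|\Pi(z)(T) - \Pi(z)(P)\| \le \|T - P\|$ to write $z \mapsto \lip\Pi(z)(T)x, y\rip$ as a uniform-on-$\bbD$ limit of polynomials, hence analytic.

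For the ``if'' direction, suppose $\Pi$ is given with $\Pi(z_*)|_\A = \sigma$ for some $z_*$. By the observation recorded just before the statement (a bounded analytic real-valued function on $\bbD$ is constant, applied to $z \mapsto \lip\pi_z(A)x, x\rip$ for selfadjoint $A$), the map $z \mapsto \pi_z := \Pi(z)|_\A$ is constant, hence equal to $\sigma$ throughout. Write $\Pi(z) = \sigma\times K_z$ with $K_z = \Pi(z)(U_\alpha)$; then $AU_\alpha = U_\alpha\alpha(A)$ forces $\sigma(A)K_z = K_z\sigma(\alpha(A))$ for all $A$, and $\|K_z\| \le 1$. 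Taking adjoints and replacing $A$ by $A^*$ yields $K_z^*\sigma(A) = \sigma(\alpha(A))K_z^*$, and combining the two identities shows $K_z^*K_z$ and $K_zK_z^*$ each commute with $\sigma(\A)$ (using $\sigma(\alpha(\A)) = \sigma(\A)$ since $\alpha$ is an automorphism). As $\sigma$ is irreducible, $K_z^*K_z = \lambda(z)I$ and $K_zK_z^* = \mu(z)I$ for scalars in $[0,1]$, and $\lambda(z)K_z = K_zK_z^*K_z = \mu(z)K_z$ gives $\lambda(z) = \mu(z)$ whenever $K_z \neq 0$. The linchpin is that $K_z$ is not identically zero: were it so, $\Pi(z)$ would agree with $\sigma$ on both generating sets $\A$ and $U_\alpha$ of the semicrossed product, hence $\Pi$ would be constant, contrary to hypothesis. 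Picking $z_0$ with $K_{z_0} \neq 0$ and $\lambda_0 := \lambda(z_0) > 0$, the operator $W := \lambda_0^{-1/2}K_{z_0}$ satisfies $W^*W = WW^* = I$ and $\sigma(A)W = W\sigma(\alpha(A))$, i.e. $\sigma\circ\alpha = \ad_{W^*}\circ\sigma \simeq \sigma$.

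I expect the only real obstacle to be the bookkeeping around representations on a \emph{fixed} Hilbert space: in the forward direction I must use a contractive (non-isometric) covariant representation $(\sigma, zW)$ and invoke its extension to $\A\times_\alpha\bbZ^+$ in the style of Lemma~\ref{itexists}, and in the converse I must keep straight that the covariance relation obeyed by $\Pi(z)$ is the one coming from $AU_\alpha = U_\alpha\alpha(A)$. The genuinely load-bearing step is the passage ``$\Pi$ non-constant $\Rightarrow K_z \not\equiv 0$'', which is precisely what guarantees that the irreducibility-forced scalar $\lambda(z)$ is nonzero somewhere and thereby produces the intertwining unitary.
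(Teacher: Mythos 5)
Your proof is correct and follows essentially the same route as the paper: the forward direction is the same construction $\Pi(z)=\sigma\times zW$ (relying, as the paper does, on the universality of the semicrossed product for contractive covariant pairs), and the converse uses the pre-proposition observation that $z\mapsto\Pi(z)|_{\A}$ is constant, picks $z_0$ with $K_{z_0}\neq 0$ from non-constancy, and extracts the equivalence from the intertwining relation. The only difference is that you spell out the Schur-type argument ($K_{z_0}^*K_{z_0}$ and $K_{z_0}K_{z_0}^*$ scalar, hence a rescaled $K_{z_0}$ is unitary), which the paper compresses into the single sentence that a nonzero intertwiner between the irreducible representations $\sigma$ and $\sigma\circ\alpha$ yields $\sigma\simeq\sigma\circ\alpha$.
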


\begin{proof}
First assume that $\sigma \simeq \sigma \circ \alpha$ and let
$U$ be a unitary such that  $\sigma(A) U = U (\sigma \circ \alpha)(A)$
for $A \in \A$.  Then
\[
 \Pi(z) = \sigma \times zU  \qfor z \in \bbD
\]
has the desired properties.

Conversely, assume that such a map $\Pi$ exists. By the discussion above
$\Pi(z)\vert { \A }=\sigma$ for all $z \in \bbD$.  Since $\Pi$ is not constant, there
exists some $z \in \bbD$ so that $K := \Pi(z)(U_{\alpha}) \ne 0$. Therefore,
\begin{align*}
 \sigma(A)K&=\Pi(z)(A)\Pi(z)(U_{\alpha})   \\
 &=\Pi(z)\left(U_{\alpha}\alpha(A)\right)    \\
 &=K\sigma(\alpha(A)) \qquad \qforal A \in \A.
\end{align*}
\noindent Since both $\sigma$ and $\sigma \circ \alpha$ are irreducible and $K \neq 0$,
we obtain that
$\sigma \simeq \sigma\circ \alpha$.
\end{proof}

\begin{rem}
Note that if $\Pi$ is as in the above Proposition, then the operator $K$ in the proof
is necessarilly a scalar multiple of the (unique) unitary operator $U$ implementing
the equivalence $\sigma \simeq \sigma \circ \alpha$.
Therefore the range of $\Pi$ is contained in
\[
 D_{\sigma} \equiv \{ \sigma \times  zU \mid z\in \overline{\bbD}\}.
\]
In this fashion, we associate with each
fixed point $\sigma \in \hat{\A}$, a unique \textit{maximal analytic set} $D_{\sigma}$.
Any representation of $\A \times_{\alpha} \bbZ^+$ not belonging to the union
of the maximal analytic sets is associated with a non-fixed point of $\hat{\alpha}$.
\end{rem}



\begin{thebibliography}{99}

\bibitem{APT} C. Akemann, G. Pedersen and J. Tomiyama,
\textit{Multipliers of C*-Algebras},
J. Funct.\ Anal.\ \textbf{13} (1973), 277--301.

\bibitem{AlaP} M. Alaimia and J. Peters,
\textit{Semicrossed products generated by two commuting automorphisms}
J. Math.\ Anal.\ Appl.\ \textbf{285} (2003), 128--140.

\bibitem{Arv} W. Arveson,
\textit{Operator algebras and measure preserving automorphisms},
Acta Math.\ \textbf{118}, (1967), 95--109.

\bibitem{ArvJ} W. Arveson and K. Josephson,
\textit{Operator algebras and measure preserving automorphisms II},
J. Funct.\ Anal.\ \textbf{4} (1969), 100--134.

\bibitem{Bu} D. Buske,
\textit{Hilbert modules over a class of semicrossed products},
Proc.\ Amer.\ Math.\ Soc.\ \textbf{129}  (2001), 1721--1726.

\bibitem{BP} D. Buske and J. Peters,
\textit{Semicrossed products of the disk algebra: contractive
 representations and maximal ideals.},
Pacific J. Math.\ \textbf{185} (1998), 97--113.

\bibitem{DP} L. DeAlba and Peters, J.
\textit{Classification of semicrossed products of finite-dimensional C*-algebras},
Proc.\ Amer.\ Math.\ Soc.\ \textbf{95} (1985), 557--564.

\bibitem{DavK} K. Davidson and E. Katsoulis
\textit{Isomorphisms between topological conjugacy algebras},
J. Reine Angew.\ Math.\ (Crelle), to appear.

\bibitem{DK2} K. Davidson and E. Katsoulis
\textit{Operator algebras for multivariable dynamics},
preprint, 2007.

\bibitem{Elliott} G. Elliott,
\textit{Ideal preserving automorphisms of postliminary C*-algebras},
Proc.\ Amer.\ Math.\ Soc.\ \textbf{27}, (1971), 107--109.

\bibitem{HadH}  D. Hadwin and T. Hoover,
\textit{Operator algebras and the conjugacy of transformations},
J. Funct.\ Anal.\ \textbf{77} (1988), 112--122.

\bibitem{Hoov} T. Hoover,
\textit{Isomorphic operator algebras and conjugate inner functions},
Michigan Math.\ J. \textbf{39} (1992), 229--237.

\bibitem{HPW} T. Hoover, J. Peters and W. Wogen,
\textit{Spectral properties of semicrossed products.}
 Houston J. Math.\ \textbf{19} (1993), 649--660.
 
\bibitem{KadR} R. Kadison and Ringrose, J.
\textit{Derivations and automorphisms of operator algebras},
Comm.\ Math.\ Phys.\ \textbf{4} (1967), 32--63.

\bibitem{Kat} T. Katsura,
\textit{On C*-algebras associated with C*-correspondences},
J. Funct.\ Anal.\ \textbf{217} (2004), 366--401.
 
\bibitem{Kis}  A. Kishimoto,
\textit{Outer automorphisms and reduced crossed products of simple
C*-algebras},
Comm.\ Math.\ Phys.\ \textbf{81} (1981), 429--435.

\bibitem{Lam-1} M. Lamoureux,
\textit{Nest representations and dynamical systems},
J. Funct.\ Anal.\ \textbf{114} (1993), 467--492.

\bibitem{Lam} M. Lamoureux,
\textit{Ideals in some continuous nonselfadjoint crossed product algebras.}
J. Funct.\ Anal.\ \textbf{142} (1996),  211--248.

\bibitem{Lance} E.C. Lance,
\textit{Automorphisms of postliminal C*-algebras},
Pacific J. Math.\ \textbf{23}, (1967), 547--555.


\bibitem{MM} M. McAsey and P. Muhly,
\textit{Representations of nonselfadjoint crossed products},
Proc.\ London Math.\ Soc.\ (3) \textbf{47} (1983), 128--144.

\bibitem{MS} P. Muhly, B. Solel,
\textit{On the Morita equivalence of tensor algebras},
Proc.\ London Math.\ Soc.\ (3) \textbf{81} (2000), 113--168.

\bibitem{OPed} D. Olesen and Pedersen, G.
\textit{Applications of the Connes spectrum to C*-dynamical systems III},
J. Funct.\ Anal.\ \textbf{45} (1982), 357--390.

\bibitem{OS} T. Ohwada and K. Saito,
\textit{Factorization in analytic crossed products},
J. Math.\ Soc.\ Japan \textbf{54} (2002), 21--33.

\bibitem{Pet} J. Peters,
\textit{Semicrossed products of C*-algebras},
J. Funct.\ Anal.\ \textbf{59} (1984), 498--534.

\bibitem{Pet2} J. Peters,
\textit{The ideal structure of certain nonselfadjoint operator algebras},
Trans.\ Amer.\ Math.\ Soc.\ \textbf{305} (1988), 333--352.

\bibitem{Pow} S. Power,
\textit{Classification of analytic crossed product algebras},
Bull.\ London Math.\ Soc.\ \textbf{24} (1992), 368--372.

\bibitem{Solel-1} B. Solel,
\textit{The invariant subspace structure of nonselfadjoint crossed products},
Trans.\ Amer.\ Math.\ Soc.\ \textbf{279} (1983), 825--840.

\end{thebibliography}
\end{document}